\title{On local real algebraic geometry and applications to kinematics}
\date{\today}
\author{Marc Diesse\\ Faculty of Mechanics and Electronic, Hochschule Heilbronn }
\begin{document}
\maketitle
\begin{abstract}
We address the question of identifying non-smooth points in $\algrv(I)$ the real part of an affine algebraic variety. A simple algebraic criterion will be formulated and proven. As an application
we can answer several questions asked in \cite{piippo:planar} about the configuration spaces of planar linkages and frameworks, respectively.
\end{abstract}

\section{Introduction}
For any zero set $X = \algrv(I)$ of an ideal $I = (g_1, \ldots, g_k) \leq \R[\bx]$, $\bx = (x_1, \ldots, x_n)$, there is the question of identifying points where $X$ is not locally a submanifold of $\R^n$. 


The standard approach to this problem is to look for points $p \in X$, where the rank of the jacobian of $(g_1, \ldots, g_k)$ drops below the height of $I$, which is the codimension of $\algcv(I)$. 
Unfortunately this is in general not enough to imply, that $X$ is \textbf{not} locally a submanifold.
Obviously problems arise, if $I$ is not radical or equidimensional (cf. Ex.~\ref{ex:introduction} (ii),(iii)) and techniques to handle those problems are well known (although not computationally feasible in some cases), but there are more intricate difficulties for real algebraic sets, where the localization of the reduced coordinate ring is not regular and $X = \algrv(I)$ is still a smooth submanifold of $\R^n$ at this point (cf. Ex.~\ref{ex:introduction}~(vi)). 


The following examples show different kinds of behavior of real algebraic sets at points, where the jacobian drops rank.

\begin{exs}\label{ex:introduction} In all examples we set $\frm := \langle \bx \rangle \leq \R[\bx]$, $A=\R[\bx]/I$.
\begin{itemize}
	\item[(i)]	
	The simple node $I = \langle y^2 - x^2 - x^3 \rangle \leq \R[x,y]$, shows the expected behavior. $A_{\frm}$ is not regular and $X = \algrv(I)$ is not locally a manifold at the origin.

	\item[(ii)]
	Let $I = \langle x^2,x y \rangle \leq \R[x,y]$. Then $X = \algrv(I)$ is just the $y$-axis, which is locally a manifold at the origin, although $A_{\frm}$ is not regular. The problem here is clearly, that $I$ is not a radical ideal, i.e. $A_{\mathrm{red}} = A/\sqrt{(0)}$ localized at $\frm$ is regular. In theory $\sqrt{I}$ is algorithmically computable with Gr\"obner base methods (\texttt{radical(I)} calculates the radical in \texttt{Singular} for example). Unfortunately the computation is unfeasible in many cases. But we will see, that we can avoid the computation of the radical for many systems of polynomials, which come from engineering problems.

	\item[(iii)]
	Let $I = \langle (z-1)xy, z(z-1) \rangle \leq \R[x,y,z]$. Then $\algrv(I)$ is the union of the $x$-axis, the $y$-axis and the plane given by $z=1$. Obviously $X$ is locally not a submanifold at the origin, but the rank of the jacobian at the origin equals $\height I = 1$. Note that $I$ is radical, but not equidimensional and $A_{\frm}$ is not regular. In this case we need to calculate an equidimensional decomposition before applying the jacobian criterion. This is possible in general again with Gr\"obner base methods (\texttt{primdecGTZ(I)} calculates a primary decomposition in \texttt{Singular}) but as hard as the computation of the radical. We will see a well known criterion to decide when $I$ is already equidimensional, which is useful for many problems in kinematics.

	\item[(iv)]
	Let $I = \langle x^2 + y^2 \rangle \leq \R[x,y,z]$. Then $X = \algrv(I)$ is the $z$-axis, which is a submanifold of $\R^3$, although the rank of the jacobian drops at any point of $X$ and $I$ is radical and equidimensional. This difficulty only appears in real geometry, since $X_{\C} = \algv(I)$ is \textbf{not} locally a complex manifold at any point of the $z$-axis.
	The problem is that $I \neq \algi_\R(X) = \{f \in \R[x,y] \mid f|_{X} \equiv 0 \} = \langle x,y \rangle$, since clearly $(\R[x,y,z]/\langle x,y \rangle)_\frm$ is a regular local ring. 
	
	There are algorithms to compute the real radical $\sqrt[r]{I} = \algi_\Q(X)$ from $I \leq \Q[\bx]$ (e.g. \texttt{realrad(I)} computes the real radical over $\Q$ in \texttt{Singular}) but this computation is harder than that of the normal radical. Also we have in general $\sqrt[r]{I} \cdot \R[\bx] \ne \sqrt[r]{I \cdot \R[\bx]} = \algi_\R(X)$ (see example (v)), in contrast to the usual radical.
	If this is the case not much can be gained by computing $\algi_{\Q} (X)$. We will present a very useful criterion by T. Y. Lam \cite{lam:real_alg} to check for an ideal $I$ whether $\algi_\R(X) = I$.

	\item[(v)]
	Let $I = \langle x^3 - 5y^3 \rangle \leq \Q[x,y]$ and $X = \algrv(I)$, which is just the line given by $x = \sqrt[3]{5}\,y$. The jacobian drops rank at the origin but $X$ is an analytic submanifold of $\R^2$. Note that $I_\Q(X) = I$, but $I_\R(X) = \langle x - \sqrt[3]{5}\,y \rangle$.

	\item[(vi)]
	This example motivated this paper. Let $I = \langle y^3 + 2\,x^2\,y - x^4 \rangle \leq \R[x,y]$ and $X = \algrv(I)$. We will see, $A_{\frm}$ is not regular and even $\algi_\R(X) = I$, but $X = \algrv(I)$ is the analytic submanifold of $\R^2$ shown in figure~\ref{fig:curve_2xy2}. We notice, that $I = \algi_\R(X)$ and $A_\frm$ not regular does not imply, that $X$ is "nonsmooth" at the origin.
	
	The reason here is that some analytic branches are not visible in the real picture. We will carefully investigate this case by analyzing the completion of the local ring at this point.

	\item[(vii)]
	Let $I = \langle y^3 - x^{10} \rangle \leq \R[\bx]$ and $X = \algrv(I)$. Here $A_\frm$ is not regular and $\algi(X) = I$ again. But in this case $X$ is not locally an analytic submanifold at the origin although the real picture looks very "smooth", which is because $X$ is a $C^3$-submanifold (but not $C^4$). 
	
	It is well known, that any real algebraic set which is (locally) $C^{\infty}$ is also $C^{\omega}$, so any "nonanalytic" point is at the most "finitely differentiable". This example emphasizes the need for an algebraic criterion to algebraically discern between the singularities seen in the last two examples because the real picture can be very deceiving. Criteria to identify points which are not locally topological submanifolds are beyond the scope of this article, although we will see, that we can rule out this case in a lot of situations.	  
\end{itemize}
\end{exs}
In this paper we want to show strategies to effectively deal with all the problems seen in the examples when analyzing a singular point of a real algebraic set. This is of great importance in the theory of linkages \cite{muller:sing_conf} \cite{muller:local_geometry}, when studying local kinematic properties, since the configuration space of a linkage will usually be given as a real algebraic set. For a further discussion we refer the reader to the final sections~\ref{sec:four_bar} and \ref{sec:higher_dim}, where we investigate the configuration space of a class of planar linkages with the developed techniques. We will be able to address all questions raised in \cite{piippo:planar}.

The rest of the paper is structured as follows: In section 2 and 3 we review some well known facts from commutative algebra, real algebra and differential geometry, which will enable us to make precise the notion of manifold point and deal with examples (i)-(v). We will also put a focus on base extensions of affine algebras, which comes in very handy if one needs to extend results gained by calculations in polynomial rings over $\Q$ to polynomial rings over $\R$.  

In section 4 we will build the theoretical foundation for local analysis of real algebraic sets. Central to the exposition is theorem~\ref{thm:mani_reg} which gives an algebraic condition for manifold points and will show together with Risler's analytic Nullstellensatz, that this is an intrinsic property.

Section 5 deals with the problem, that the extensions of a prime ideal of $\R[\bx]$ to the ring of formal power series $\R[[\bx]]$ will not be prime in general and symbolic calculations in $\R[[\bx]]$ are not possible effectively. Instead we will investigate the integral closure of the local ring to divide the analytic branches. The main result Theorem~\ref{thm:normal} goes back to Zariski~\cite{zariski:comm_alg} and was extended by Ruiz~\cite{ruiz:power_series} to a complete description of the normalization of $\CF$, where $\CF$ is the local ring $\R[[\bx]]/\left(I \cdot \R[[\bx]] \right)$.

Finally in section~6 we formulate and prove Theorem~\ref{thm:curves}, which decides the case completely for real algebraic curves. This extends results of \cite{maria:plane_curves}. 

For further reading regarding local properties of real algebraic sets the following authors - without whom (among many others) this paper would not have been possible - are recommended: H. Whitney~\cite{whitney:local_prop}, \cite{whitney:tangents} for his work on (tangents of) analytic varieties. T. Lam~\cite{lam:real_alg} for his real algebra introduction. The book of J. Ruiz~\cite{ruiz:power_series} covering all basic theory of power series rings. G. Efroymson~\cite{efroymson:local_reality} for the fundamental work on the realness of local ring completions, R. Risler for the (analytic) real Nullstellensatz~\cite{risler:real_nullstellensatz},\cite{ruiz:power_series} and D. O'Shea, L. Charles \cite{shea:limits_tangent} for their work on geometric Nash fibers, limits of tangent spaces and real tangent cones.
\begin{figure}\label{fig:curve_2xy2}
\centering
\begin{tikzpicture}
\begin{axis}[xmin=-1,xmax=0.5*2,ymin=0, ymax=1, xtick={-0.8,0.8}, ytick={-0.8,0.8}, unit vector ratio*=1 1 1, axis lines=center, axis on top, xlabel={$x$}, ylabel={$y$}]
\addplot[very thick, blue] file {data_2xy2};
\end{axis}
\end{tikzpicture}%
\caption{$\algrv(y^3 + 2x^2y - x^4)$}%
\end{figure}
\section{Algebraic Preliminaries}\label{sec:alg_prelim}
In this section let $\K$ be a field with $\Q \subset \K \subset \R$, $f_1, \ldots, f_n$ a set of polynomials in $\K[\bx]$, where $\bx = (x_1, \ldots, x_n)$, and $I = \langle f_1, \ldots, f_n \rangle \leq \K[\bx]$ the ideal generated by the $f_i$. We set $A = \K[\bx]/I$ and consider two sets associated to $A$:
\begin{align*}
X_\C & := \{\, x \in \C^n \mid f(x) = 0, \text{for all $f \in I$} \,\} = \algv(I), \\
X & := \{\, x \in \R^n \mid f(x) = 0, \text{for all $f \in I$} \,\} = \algrv(I).
\end{align*}
Sometimes we will call $X$ the \defm{real picture} of $X_\C$. 
Since we can usually only perform symbolic computations over the rational numbers we need to investigate base changes of $A$. For any extension field $\K \subset \K'$ and any ideal $J \leq A$ we set
\begin{align*}
I_{\K'} & := \K' \otimes_\K I = I \cdot \K'[\bx], \quad  A_{\K'} := \K' \otimes_\K A = \K'[\bx]/ I_{\K'} \\
J_{\K'} & := \K' \otimes_\K J = ( \hat{J} \cdot \K'[\bx]) / I_{\K'}, \quad \text{where $\hat{J} \leq \K[\bx]$ with $\hat{J}/I = J$}. 
\end{align*}
If $\K' = \C$, we call $A_\C$, $I_{\C}$ or $J_{\C}$ the complexification of $A$, $I$ or $J$ respectively. Finally for any $p = (p_1, \ldots, p_n) \in \C^n$ we define the maximal ideal
\[
\frm_p = \langle x_1 - p_1, \ldots, x_n - p_n \rangle \subset \C[\bx].
\]

\begin{definition}
The \defm{singular locus} of $A$ is the set of all prime ideals $\frp \in \spec A$ such that $A_{\frp}$ is not regular. A point $p \in X_{\C}$ is called a \defm{singularity} of $X_{\C}$ if $((A_{\C})_{\mathrm{red}})_{\frm_p}$ is not regular, i.e. $\frm_p$ is in the singular locus of $(A_{\C})_{\mathrm{red}}$.
\end{definition}

\begin{remark}
$(A_{\C})_{\mathrm{red}}$ denotes the reduction $A_{\C}/\sqrt{(0)}$ without nilpotents.
The stacking of subscripts in $((A_{\C})_{\mathrm{red}})_{\frm_p}$ is admittedly horrible but we will see in Proposition~\ref{prop:base_change}, that there is a certain kind of freedom in the choice of the coefficient field. So we can get rid of the complexification and/or the reduction if $I$ is radical and/or $\frm_p \leq A$. 
\end{remark}

\subsection{Base Change}
We review some facts from commutative algebra regarding extensions of the coefficient field.

\begin{prop}[Base Change]\label{prop:base_change} Let $\K'$ be any field extension of $\K$, where $\mathrm{char}(\K) = 0$. Then
	\begin{itemize}
		\item[(i)] $I_{\K'} \cap \K[\bx] = I$.
		\item[(ii)] $\height I_{\K'} = \height I$, $\dim A_{\K'} = \dim A$.
		\item[(iii)] $\sqrt{I_{\K'}} = \sqrt{I} \, \K'[\bx]$.
		\item[(iv)] Let $\frp \leq A$ prime. Then $A_{\frp}$ is regular iff $(A_{\K'})_{\frP}$ is regular for one and then all associated primes $\frP$ of $\frp_{\K'}$.
	\end{itemize}
\end{prop}

\begin{remark}
Since we require $\mathrm{char}(\K) = 0$, $\K$ is a perfect field and therefore $\K'$ separable over $\K$, which means (note that $\K \subset \K'$ doesn't need to be algebraic), that
every finitely generated subextension is separably generated  over $\K$ compare \cite[A1.2]{eisenbud:comm_alg}. Whereas (i) and (ii) would work for any field extension (iii) and (iv) are in general wrong if $\K \subset \K'$ 
is not separable.
\end{remark}

\begin{proof}
	(i) and (ii) follow because $\K[\bx] \subset \K'[\bx]$ is a faithfully flat ring extension. (iii) is a consequence of the fact, that any reduced $\K$-algebra is geometrically reduced~\cite[Lemma~10.42.6, Lemma~10.44.6]{stacks}. We will show (iv) with the general jacobian criterion \cite[Thm. 5.7.1]{greuel:singular_commutative}, since there appears to be no reference in the usual literature on commutative algebra.
	
	First choose any associated prime $\frP$ of $\frp_{\K'}$ and let $\hat{\frp}$, $\hat{\frP}$ denote the preimages of $\frp$ and $\frP$ in $\K[\bx]$ and $\K'[\bx]$ respectively. Now write $K$ for the quotient field of $\K[\bx]/\hat{\frp}$ and 
	$K'$ for the quotient field of $\K'[\bx]/\hat{\frP}$. Since $\hat{\frP} \cap \K[\bx] = \hat{\frp}$ \cite[VII Theorem 36]{zariski:comm_alg}, $K$ is clearly subfield of $K'$. 
	For any $K$-vectorspace $V$ we then have $\dim_K V = \dim_{K'} K' \otimes_K V$, since the tensor product commutes with direct sums. Consequently
	\[
		\rank \left[  \frac{\partial f_i}{\partial x_j}  \bmod \hat{\frp} \right]_{i,j} = \rank \left[  \frac{\partial f_i}{\partial x_j}  \bmod \hat{\frP} \right]_{i,j} =: h,
	\]  
	where $\langle f_1, \ldots, f_n \rangle = I$, as stated in the beginning of section~\ref{sec:alg_prelim}.
	
	Now assume $A_{\frp}$ is a regular local ring and choose an associated prime $\frq$ of $I$ with $\frq \subset \hat{\frp}$ (note that there should be only one prime with this property, otherwise $A_{\frp}$ wouldn't be regular). Then we conclude $\height \frq = h$ from the jacobian criterion. Now any associated prime of $\frq_{\K'}$ has height $h$ as well \cite[VII Theorem 36]{zariski:comm_alg} and one of them is contained in $\hat{\frP}$. But then $(A_{\K'})_{\frP}$ is regular according to the general jacobian criterion. 
	
	On the contrary assume $(A_{\K'})_{\frP}$ is regular. Then there exists an associated prime $\frQ$ of $I_{\K'}$ with $\frQ \subset \hat{\frP}$ and $\height \frQ = h$. 
	Now since $\frQ$ is associated to $I_{\K'}$ it is associated to $\frr_{\K'}$ for a primary ideal $\frr \in \K[\bx]$ which is part of a primary decomposition of $I$ (use $(J_1 \cap J_2)\, \K'[\bx] = J_1 \K'[\bx] \cap J_2 \K'[\bx]$ for ideals $J_1,J_2 \leq \K[\bx]$ and $\frQ = (I_{\K'} : \langle b \rangle)$, for some $b \in \K'[\bx]$). So $\frq := \sqrt{\frr}$ is a prime ideal associated to $I$. 
	Now
	\[
	\frr = \frr_{\K'} \cap \K[\bx] \subset \frQ \cap \K[\bx] \subset \hat{\frP} \cap \K[\bx] = \hat{\frp}.
	\]
	But then $\frq = \sqrt{\frr} \subset \hat{\frp}$. Also $h = \height \frQ  \geq \height \frq$. Consequently $A_{\frp}$ is regular according to the general jacobian criterion.	
\end{proof}

%
%

\subsection{Real Algebra}
We review some facts from real algebra. Most of them can be found in \cite{lam:real_alg} or \cite{bochnak:real_alg_geom}.
\begin{definition}
	Let $B$ be any commutative ring and $I \leq B$ an ideal. $B$ is called (formally) \defm{real}, iff any equation
	\[
	b_1^2 + \ldots + b_k^2 = 0, \quad k \geq 1,
	\] 
	implies $b_1 = \ldots = b_k = 0$. $I$ is called \defm{real}, if $B/I$ is real. Also we define the \defm{real radical}
	\[
	\sqrt[r]{I} = \{\,  x \in B \mid x^{2r} + b_1^2 + \ldots + b_k^2 \in I, \text{ for $r,k \geq 0$, $b_i \in B$} \,\},
	\]
	which is the smallest real ideal containing $I$ or $B$ if there are no real ideals between $I$ and $B$ cf.\cite{bochnak:real_alg_geom}. Therefore $I$ is real if and only if $\sqrt[r]{I} = I$.
\end{definition}
The analogue to Hilbert's Nullstellensatz in real algebraic geometry is the
\begin{prop}[Risler's Real Nullstellensatz \cite{lam:real_alg}]\label{prop:real_null}
	Let $I \leq \K[\bx]$ be any ideal. Then 
	\[
	\algi_{\K}(\algrv(I)) = \sqrt[r]{I}.
	\]
\end{prop}
\begin{exs}\label{ex:real}\hfill
\begin{itemize}
\item[(i)] $\C$ is clearly not real, since $1^2 + i^2 = 0$, but $\Q$ and $\R$ are. Also any domain $B$ is real iff its field of fractions is real $Q(B)$ can then be ordered.

\item[(ii)] Consider the ideal $I = \langle x^2 + y^2\rangle \leq \R[x,y,z]$ from Ex.~\ref{ex:introduction}~(iv). Then $I$ is not real, since $x,y \notin I$.
We see easily from the definition, that $x,y \in \sqrt[r]{I}$ and from the real Nullstellensatz~(see Proposition~\ref{prop:real_null}) follows, that $1 \notin \sqrt[r]I$. Hence $\sqrt[r]{I} = \langle x,y \rangle$.

\item[(iii)]
Let $I = \langle x^3 - 5\,y^3 \rangle \leq \Q[x,y]$ from Ex.~\ref{ex:introduction}~(v). Then $I$ is prime in $\Q[x,y]$. Since there exist points $p \in \algrv(I)$ with $(\R[x,y]/I_\R)_{\frm_p}$ regular, $I$ must be real in $\Q[x,y]$, see remark (i) after Proposition~\ref{prop:simple_point}. $I_\R$ is not real however, since $\sqrt[r]{I_\R} = \algi_{\R}(\algrv(I_\R)) = \langle x - \sqrt[3]{5}\,y \rangle$. This is different for the standard radical, see Proposition~\ref{prop:base_change}

\item[(iv)] $f(x,y) = y^3 + 2\,x^2y - x^4$ from Ex.~\ref{ex:introduction}~(vi) is an irreducible polynomial in $\R[\bx]$ and for any $x_0 \ne 0$, there exists a real solution $y_0 \in \R$ of $f(x_0,y) = 0$, since this is a polynomial of degree $3$. Also the local ring at $(x_0,y_0)$ is regular with the jacobian criterion, hence $I = \langle f \rangle$ is a real ideal of $\R[\bx]$ according to~Proposition~\ref{prop:simple_point}.
\end{itemize}	
\end{exs}

\begin{prop}[Simple Point Criterion \cite{lam:real_alg}]\label{prop:simple_point}
Let $\K=\R$ and $I \leq \R[\bx]$. Then $I$ is real, if and only if $I$ is radical and for every associated prime $\frp$ of $I$ there exists $x \in \algrv(\frp)$, with $A_{\frm_x}$ regular.
\end{prop}

\begin{remarks}\hfill
	\begin{itemize}
		\item[(i)] We can easily modify the proof in \cite{lam:real_alg} to show the following (one-sided) generalization for $I \leq \K[\bx]$:
		Assume $I$ is radical and for every associated prime $\frp$ of $I$, there exists $x \in \algrv(\frp)$ with 
		$(A_{\R})_{\frm_x}$ regular, then $I$ is real.
		
		\item[(iii)] There exist algorithm to compute the real radical of an ideal $J \leq \Q[\bx]$ (e.g. \texttt{realrad} in \texttt{Singular}), but to the authors knowledge, all implemented algorithm so far only compute over $\Q$ since there is ambiguity in the ordering of field extension of $\Q$ (in \texttt{Singular} we have \texttt{realrad(x\^{}3 - 5y\^{}3) = x\^{}3 - 5y\^{}3}). 
	\end{itemize}
\end{remarks}

\section{Analytic Preliminaries}
In the following we let $\K = \R,\C$. Any open set $U \subset \K^n$ is meant to be euclidean open. $f$ is called analytic at $p \in K$ (or holomorphic for $\K = \C$), if
\[
f(z) = \sum c_{i_1}, \ldots c_{i_n} \, (z_{1} - p_1)^{i_1} \ldots (z_{n} - p_n)^{i_n}
\]
in a neighborhood of $p$. 

A $d$-dimensional smooth (analytic, complex) \defm{submanifold} of $\K^n$ is a set $X \subset \K^n$ such that for every $p$ in $X$ there exists an open set $U \subset \K^n$ and a $C^\infty$-diffeomorphism ($C^{\omega}$, biholomorphism) $\phi \colon U \to V$ to an open set $V \subset \K^n$, with
\[
X \cap U = \{ x \in U \mid \phi_{d+1}(x) = \ldots = \phi_{n}(x) = 0 \, \} .
\]

A set $X \subset \K^n$ with point $p \in X$ is locally at $p$ \defm{the graph} of an analytic (smooth, holomorphic) mapping (in the first $d$ coordinates), if there exists an open neighborhood $U$ of $p$ and an analytic (smooth, holomorphic) mapping $\psi \colon \rho(U) \to \K^{n-d}$ such that 
\[
X \cap U = \{\, (y, \psi(y)) \mid y \in \rho(U) \,\},
\]
where $\rho \colon \K^n \to \K^d$ is the projection to the first $d$ coordinates. Note that it needs to be checked, that this is a local definition which we leave to the reader.
\begin{prop}\label{prop:manifold_point}
	Let $X \subset \K^n$ be any set and $p \in X$. Then the following conditions are equivalent:
	\begin{itemize}
		\item[(a)] There is an open neighborhood $U$ of $p$ such that $X \cap U$ is an analytic (smooth, holomorphic) submani\-fold of $\K^n$.

		\item[(b)] There exists a permutation $\pi \colon \K^n \to \K^n$ of coordinates such that 
		\[
		\pi(X) = \{\, \pi(x) \mid x \in X \,\} 
		\]	
		is locally the graph of an analytic (smooth, holomorphic) mapping at $\pi(p)$.
		
		\item [(c)] For a generic choice of $A \in \GL(n,\K)$, $A(X)$ is locally the graph of an analytic (smooth, holomorphic) mapping at $Ap$.
	\end{itemize}
\end{prop}

\begin{definition}
A point $x$ of a set $X \subset \K^n$ is an analytic (smooth, holomorphic) \defm{manifold point} of $X$, if any of the equivalent conditions of Proposition~\ref{prop:manifold_point} is fulfilled.
\end{definition}
Any smooth mapping parameterizing a real algebraic set will be a smooth semi-algebraic mapping whose component functions are known to be Nash-functions~\cite[2.9.3]{bochnak:real_alg_geom} and in particular analytic. We get the following proposition:
\begin{prop}\label{prop:alg_ana_smooth}
	Let $\K = \R$ and $X \subset \R^n$ be a real algebraic set, with $p \in X$. $p$ is an analytic manifold point of $X$ if and only if $p$ is	a smooth manifold point of $X$
\end{prop}

In light of Proposition~\ref{prop:alg_ana_smooth} it is enough to work with analytic manifold points if one considers algebraic subsets of $\R^n$. From now on manifold point means analytic/holomorphic manifold point.

\section{Local real algebraic geometry}\label{sec:local_real}
We now assume $\K = \R$ and that the singular point of $X$ is at the origin. So we have $I \leq \R[\bx]$ an ideal with $I \subset \langle \bx \rangle =: \frm$ and $A_{\frm}$ not regular, where $A = \R[\bx]/I$. As we have seen in Example~\ref{ex:introduction}~(vi) we need to investigate the extension of $I$ in the ring of convergent power series or the completion of the local ring $A_{\frm}$. The following notations will be used:
\begin{definition}\label{def:list}\hfill
	\begin{itemize}
		\item[(i)] $I_{\frm} = I \R[\bx]_{\frm}, \quad \CR = \R[\bx]_\frm/I_{\frm} = A_{\frm}, \quad \frr = \frm \CR$  
		
		\item[(ii)] $I' = I \R\{ \bx \}, \quad \CO = \R\{ \bx \}/I', \quad \fro = \frm \CO$
		
		\item[(iii)] $I'' = I \R[[ \bx ]], \quad \CF = \R[[ \bx ]]/I'', \quad \frf = \frm \CF$

	\end{itemize}
\end{definition}\noindent
Since the ring extensions $\R[\bx]_\frm \to \R\{\bx\} \to \R[[\bx]]$ are faithfully flat, we have the following chain of local rings
\[
\CR \subset \CO \subset \CF.
\]
We will also need the fact, that $\CF$ is the $\frr$-adic completion of $\CR$:
\[
\CF = \varprojlim_{k} \CR/\frr^k.
\]
Now we define the following ideal of $\R\{\bx\}$, which is usually called the vanishing ideal of the set germ $(X_\R,0)$ \cite{risler:real_nullstellensatz}. We can do a similar construction for $\R[[\bx]]$, but since $f(p)$ is here in general not defined for elements $p \in \R^n$, we need to replace points in $\R^n$ with tuples of formal Puiseux series without constant term,  $f \in \R[[\bx]]$. See \cite[Def. IV.4]{ruiz:power_series} for this approach. 
\begin{definition}\label{def:hat_i}
\[
\hat{I} = \left\{ f \in \R\{\bx\}  \Biggm|  \parbox{18em}{$ \exists \ U \ni 0$ euclidean neighborhood with $f$ \\ converging on $U$ and $f \equiv 0$ on $X_\R \cap U$ } \right\}, 
\quad \hat{\CO}  = \R\{ \bx \}/\hat{I}.
\] 
\end{definition}

\begin{thm}\label{thm:mani_reg}
The origin is a manifold point of $X$ if and only if $\hat{\CO}$ is regular.
\end{thm}

\begin{proof}
	First let the origin be a manifold point of $X$ (of dimension $d$) with parametrization
	\begin{align*}
	\psi \colon U & \mapsto \R^n, \\ 
	(x_1 \ldots x_d) & \to (x_1, \ldots x_d, \phi_1(x_1, \ldots x_d), \ldots, \phi_{n-d}(x_1, \ldots, x_d)),
	\end{align*}
	where $U$ is an euclidean neighborhood of the origin and $\psi(0) = 0$.
	We set 
	\[
	K := \langle x_{d+1} - \phi_1(x_1, \ldots, x_{d}), \ldots, x_{n} - \phi_{n-d}(x_1, \ldots x_{d}) \rangle \leq \R\{\bx\}
	\]
	and claim, that $K = \hat{I}$.
	
	Clearly we have $K \subset \hat{I}$, so let $a \in \hat{I}$. Since $\psi(0) = 0$, we can compose $a$ and $\psi$ and get a converging power series 
	\begin{equation}\label{eq:proof_germ_manifold_smooth}
	a(x_1, \ldots x_d, \phi(x_1, \ldots, x_{d}), \ldots \phi_{n-d}(x_1, \ldots x_{d})) = 0, 
	\end{equation}
	which follows because $a \circ \psi$ is identically zero close to the origin.
	We now set $\psi_i := x_{n + i} - \phi_i(x_1, \ldots x_d) \in \R\{\bx\}$ and have, that $\psi_i$ is of $x_{d+i}$-order $1$. According to the Weierstrass Division Theorem \cite[3.2]{ruiz:power_series} we have a representation
	\[
	a = q_1 \cdot \psi_1 + r,
	\]
	with $q_1 \in \R{\bx}$ and $r \in \crps{x_1, \ldots ,x_d, x_{d+2}, \ldots, x_{n-1}}$. If we iterate this process with $r$ instead of $a$, we have a decomposition
	\[
	a = q_1 \cdot \psi_1 + \ldots + q_{n-d} \cdot \psi_{n-d} + r,
	\]
	with $r \in \crps{x_1, \ldots, x_d}$. Because of \eqref{eq:proof_germ_manifold_smooth} and
	\[
	\psi_i(x_1, \ldots x_d, \phi_1(x_1, \ldots, x_d), \ldots, \phi_{n-d}(x_1, \ldots, x_d)) = \phi_i(x_1, \ldots, x_d) - \phi_i(x_1, \ldots x_d) = 0,
	\]
	we have
	\[
	r(x_1, \ldots, x_d) = 0,
	\]
	so $r = 0$ and therefore $a \in K$. 
	
	It now remains to check, that $\crps{x_1, \ldots x_n}/K$ is a regular local ring. We will use Nagata's Jacobian Criterion~\cite[4.3]{ruiz:power_series}. With $\frm'$ the maximal ideal of $\R\{\bx\}$, it is enough to show, that $\frm' \not \supset J_{n-d}(K)$ and $\height(K) \leq n-d$, where $J_{n-d}(K)$ is the jacobian ideal of order $n-d$ of $K$ \cite[4.1]{ruiz:power_series}. Then $\R\{\bx\}$ is a regular local ring of dimension $n - (n - d) = d$. Since $K$ is generated by $n-d$ elements $\height(K) \leq n-d$ follows easily from Krull's height theorem~\cite[11.16]{atiyah:intro_comm}. Now the jacobian 
	\[
	\frac{D(\psi_1, \ldots, \psi_{n-d})}{D(x_{d+1}, \ldots, x_{n})} = 1,
	\]
	hence $J_{n-d} = \R\{\bx\} \not\supset \frm'$.

	Now on the contrary let $\hat{\CO}$ be regular with $\dim \hat{\CO} = d$. According to Nagata's Jacobian Criterion, it is $\frm' \not\supset J_{n-d}(\hat{I})$. Since clearly $\frm' \supset \hat{I}$ there must exist $g_1, \ldots, g_{n-d} \in \hat{I}$ such, that w.l.o.g
	\[
	\frac{D(g_1, \ldots, g_{n-d})}{D(x_1, \ldots,x_{n-d})} \notin \frm'.
	\]
	But this means, that determinant of the first $(n-d)$ columns of the jacobian matrix of the $g_i$ evaluated at the origin is nonzero. Let $U$ be an euclidean environment of the origin such, that the region of convergence of $g_i$ is contained in $U$ for all $i$. We then set
	\[
	X' := \{\, x \in U \mid g_i(x) = 0,\ \text{for all $i$} \,\}.
	\]
	According to the analytic implicit function theorem the origin is then a manifold point of $X'$ and we only have to show, that 
	$X'$ agrees with $X$ on a neighborhood of the origin, which follows easily if we can prove $K \coloneqq \langle g_1, \ldots, g_{n-d} \rangle = \hat{I}$. 
	
	By our choice of $g_1, \ldots, g_{n-d}$ we clearly have $K \subset \hat{I}$. On the other hand, since $\frm' \not\supset J_{n-d}(K)$ and $\height(K) \leq {n-d}$ we can apply Nagata's Jacobian Criterion again to see, that $\R\{\bx\}/K$ is a regular local ring of dimension $d$. Then $\R\{\bx\}$ is also integral, so $K$ is a prime ideal and because $\R\{\bx\}$ is Cohen-Macaulay we have $\height(K) = \dim \R\{\bx\} - \dim \R\{\bx\}/K = n-d$. But $\hat{I}$ is prime as well with $\height(\hat{I}) = n-d$. Since $K \subset \hat{I}$, we have $K = \hat{I}$. This completes the proof.
\end{proof}

The following theorem due to Risler (Rückert for the complex case) allows the calculation of $\hat{I}$.
\begin{thm}[{{Risler's Real Analytic Nullstellensatz \cite[Th\'{e}or\`{e}me~4.1]{risler:real_nullstellensatz}}}]\label{thm:nullstellensatz}
\[
\hat{I} = \sqrt[r]{I'} = \left\{\,  f \in \R\{ \bx \} \bigm| f^{2n} + b_1^2 + \ldots + b_k^2 \in I', \quad r,k \geq 0,\ b_i \in \R\{\bx\} \,\right\},
\]
\end{thm}


The next proposition collects some well known facts on the relationship of the rings $\CR$, $\CO$, $\CF$.
\begin{prop}\label{prop:compare}\hfill
	\begin{itemize}
		\item[(a)]
		$\CR$ reduced $\Leftrightarrow$ $\CO$ reduced $\Leftrightarrow$ $\CF$ reduced.
		
		\item[(b)]
		$\CR$ normal domain $\Leftrightarrow$ $\CO$ normal domain $\Leftrightarrow$ $\CF$ normal domain.
		
		\item[(c)]
		$\CR$ regular $\Leftrightarrow$ $\CO$ regular $\Leftrightarrow$ $\CF$ regular.
		
		\item[(d)] 
		$I'$ real $\Leftrightarrow$ $I''$ real, i.e. $\CO$ real $\Leftrightarrow$ $\CF$ real.
		
		\item[(e)] $\CO/\sqrt[r]{(0)}$ is regular if and only if $\CF/\sqrt[r]{(0)}$ is regular.
		
		\item[(f)] If $\CF,\CO$ regular, then $\CF,\CO$ real. 
\end{itemize}
\end{prop}

\begin{proof}[proof of Proposition~\ref{prop:compare}]
The proofs for (a),(b),(c),(d) can be found in \cite[ch.~V,VI]{ruiz:power_series}. (e) is also an easy consequence of results in \cite{ruiz:power_series}. We will carry out a proof for completeness sake: 

It is only needed to prove, that $\sqrt[r]{I'}\, \R[[\bx]]  = \sqrt[r]{I''}$. Then the statement follows from one of Nagata's Comparison results~\cite[Prop.~V.4.5]{ruiz:power_series}, $\R[[\bx]] \sqrt[r]{I'} \subset \sqrt[r]{I''}$ is clear, so we proceed to demonstrate $\R[[\bx]] \sqrt[r]{I'} \supset \sqrt[r]{I''}$ by slightly adjusting the proof of Theorem V.4.2 in \cite{ruiz:power_series}. Let $f \in \sqrt[r]{I''}$, which means 
\[
f^{2s} + p_1^2 + \ldots + p_k^2 \in I'',
\]
for elements $p_1, \ldots, p_k \in \R[[\bx]]$. Consequently we have
\[
(\bar{f})^{2s} + \bar{p}_1^2 + \ldots + \bar{p}_k^2 = 0,
\]
in $\CF$. According to M. Artin's Approximation Theorem in the form of \cite[Prop. V.4.1]{ruiz:power_series} we find elements $\hat{f}, \hat{p}_1, \ldots, \hat{p}_k \in \CO$, for every $\alpha \geq 1$ such, that
\[
\hat{f}^{2s} + \hat{p}_1^2 + \ldots \hat{p}_k^2  = 0,
\]
and $\bar{f} = \hat{f} \bmod \frf^{\alpha}$ (recall that $\frf$ is the maximal ideal of $\CF$).  Then for every $\alpha \geq 0$:
\[
f \in \sqrt[r]{I'} \, \R[[\bx]] + (\frm'')^{\alpha},
\]
where $\frm'' = \frm \, \R[[\bx]]$ is the maximal ideal of $\R[[\bx]]$. It follows
\[
f \in \bigcap_{\alpha} (\sqrt[r]{I'} \, \R[[\bx]] + (\frm'')^{\alpha} \R[[\bx]]) = \sqrt[r]{I'}\R[[\bx]],
\]
since any ideal of $\R[[\bx]]$ is closed for the $\frm''$-adic topology.

Now we go on to show (f). Clearly $\CF$ is a regular local ring, with $\CF/\frf \cong \CR/\frr \cong \R$ real. Then $\CF$ must be real
according to \cite[Prop. 2.7]{lam:real_alg}.
\end{proof}

	With some minor modifications all the theory so far in section~\ref{sec:local_real} (except the statements about realness) would also work if we exchange $\R$ with $\C$ and Theorem~\ref{thm:nullstellensatz} with Rückert's analytic Nullstellensatz~\cite[Theorem~2.20, Theorem~3.7]{gunning:analytic_functions}, which states, that $\hat{I} = \sqrt{I'}$ in the complex setting. From Proposition~\ref{prop:compare} we then see easily, why there is usually no need in complex algebraic geometry to consider the completion of $\CR$ to answer questions about the regularity of $\hat{\CO}$. Because then $\hat{\CO} = \CO/\sqrt{(0)} = \CO$ if $\CR$ is reduced, and $\CO$ is regular iff $\CR$ is regular.
	
	In the real case, it is not enough for $I$ to be real to imply the realness of $I'$, see Example~\ref{ex:introduction}~(vi), hence $\hat{I}$ is in general bigger than $I'$ and the nonregularity of $\CR$ does not imply the nonregularity of $\hat{\CO}$. On the other hand if $\CR$ is regular, then $\CO$ is regular and real, hence also $\hat{\CO} = \CO$.

\begin{cor}\label{cor:real_mani}
Let $I''$ or $I'$ be real. The origin is a manifold point of $X = \algrv(I)$, if and only if the origin is nonsingular.
\end{cor}
\section{Normalization and Analytic Branches}
In order to decompose the extended ideal $I''$ we look to the normalization of $\CF$, which can be compared to the the normalization of $\CR$.

In this section we assume again $\K=\R$, but now we also require $I \leq \R[\bx]$ to be a radical ideal, with minimal decomposition
\[
I = \frp_1' \cap \ldots \cap \frp_k'.
\]
Now let w.l.o.g $\frp_1', \ldots, \frp_s' \subset \frm = \langle \bx \rangle$ and $\frp_{s+1}', \ldots, \frp'_{n} \not\subset \frm$. In $\CR = (\R[\bx]/I)_{\frm}$
we have consequently then the following minimal decomposition of the zero ideal:
\begin{equation}\label{eq:dec_zero_r}
(0) = \frp_1 \cap \ldots \cap \frp_s,
\end{equation}
where $\frp_i$, $i=1, \ldots, s$ is the prime ideal generated by $\frp_i'$ in $\CR$. From now on we will use the notation $\CR_i = \CR/\frp_i$ and for any
reduced ring $A$ we will write $\ov{A}$ for the integral closure of $A$ in its total ring of fractions. The following lemma collects some
well known facts about the integral closure of reduced local rings.
\begin{lemma}\label{prop:local_normal}
It is 
\[
\ov{\CR} = \ov{\CR_1} \times \ldots \times \ov{\CR_s}
\]
a product of semi-local normal domains. Additionally we have 
\[
\sqrt{\frr \ov{\CR}} = (\frn_{11} \cap \ldots \cap \frn_{1k_1}) \cap \ldots \cap (\frn_{s1} \cap \ldots \cap \frn_{sk_s})
\]
where the $\frn_{ij}$ are the maximal ideals of $\ov{\CR}$ in the form
\[
\frn_{ij} = \ov{\CR_1} \times \ldots \times \ov{\CR_{i-1}} \times \frn_{ij}' \times \ov{\CR_{i+1}} \times \ldots \times \ov{\CR_{s}},
\]
and $\frn_{ij}'$ is one of the $k_i$ maximal ideals of $\overline{\CR_i}$. Also we have the following minimal decomposition 
$\sqrt{\frr \overline {\CR_i}} = \frn_{i1}' \cap \ldots \cap \frn_{ik_i}'$ and
\[
\ov{\CR}_{\frn_{ij}} \cong (\ov{\CR_i})_{\frn'_{ij}}.
\]
\end{lemma}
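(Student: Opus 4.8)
The strategy is to reduce everything to standard facts about integral closures of reduced Noetherian rings, applied to the excellent (hence Nagata) local ring $\CR$. First I would recall that $\CR$ is a reduced Noetherian local ring whose total ring of fractions is $\prod_{i=1}^s \operatorname{Frac}(\CR_i)$, a finite product of fields, one for each minimal prime $\frp_i$. The integral closure of $\CR$ in this total ring of fractions then splits as a product: since idempotents of the total ring of fractions are integral over $\CR$, one gets $\ov{\CR} = \prod_{i=1}^s B_i$ where $B_i$ is the integral closure of $\CR/\frp_i = \CR_i$ in its field of fractions, i.e. $B_i = \ov{\CR_i}$. This is the first displayed equality. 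That each $\ov{\CR_i}$ is a normal domain is immediate from the definition; that it is \emph{semi-local} (module-finite over $\CR_i$, hence Noetherian semi-local with maximal ideals lying over $\frr\CR_i$) uses that $\CR_i$ is excellent, being a localization of a finitely generated $\R$-algebra modulo a prime — this is where the Nagata/finiteness-of-normalization property enters.

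Next I would analyze $\sqrt{\frr\ov{\CR}}$. Because $\ov{\CR} = \prod_i \ov{\CR_i}$ and $\frr\ov{\CR} = \prod_i \frr\ov{\CR_i}$ (the $i$-th factor being $\frr\CR_i \cdot \ov{\CR_i}$, the extension of the maximal ideal of $\CR_i$), radicals are computed factorwise: $\sqrt{\frr\ov{\CR}} = \prod_i \sqrt{\frr\ov{\CR_i}}$. In each factor, $\ov{\CR_i}$ is a semi-local Noetherian ring and $\frr\ov{\CR_i}$ is contained in its Jacobson radical (every maximal ideal of $\ov{\CR_i}$ contracts to the maximal ideal of $\CR_i$ since $\ov{\CR_i}$ is integral over the local ring $\CR_i$, by lying-over and incomparability). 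Hence $\sqrt{\frr\ov{\CR_i}}$ is the intersection of finitely many maximal ideals $\frn_{i1}',\dots,\frn_{ik_i}'$ of $\ov{\CR_i}$, and this is a minimal (irredundant) primary — in fact prime — decomposition. Translating the product decomposition back: a maximal ideal of $\prod_i \ov{\CR_i}$ has the stated shape $\frn_{ij} = \ov{\CR_1}\times\cdots\times\frn_{ij}'\times\cdots\times\ov{\CR_s}$ (all factors the full ring except one), and intersecting the factorwise decompositions gives exactly the displayed formula for $\sqrt{\frr\ov{\CR}}$. The last isomorphism $\ov{\CR}_{\frn_{ij}} \cong (\ov{\CR_i})_{\frn_{ij}'}$ is then just the elementary observation that localizing a finite product of rings at a maximal ideal supported on one factor kills the other factors and localizes that factor.

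The main obstacle — really the only nontrivial input — is the finiteness of normalization, i.e. that each $\ov{\CR_i}$ is module-finite over $\CR_i$ and hence Noetherian and semi-local; without it ``semi-local'' and the finiteness of the set of $\frn_{ij}'$ would not follow. I would handle this by invoking that $\CR$ is excellent (a localization of a finite-type $\R$-algebra), so $\CR$ and each quotient $\CR_i$ are Nagata rings, for which finiteness of normalization of domains is one of the defining properties; alternatively one cites the corresponding statement for the analytic or formal local ring and uses Proposition~\ref{prop:compare}(b) together with the faithful flatness of $\CR \to \CO \to \CF$ to transfer normality and the structure of the normalization, since the excerpt explicitly sets up this comparison. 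Everything else is bookkeeping with products of rings and radicals in semi-local Noetherian rings, which I would not spell out in detail.
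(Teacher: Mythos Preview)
Your proposal is correct and well-organized. The paper itself does not give a proof of this lemma at all: it is introduced with the phrase ``the following lemma collects some well known facts about the integral closure of reduced local rings'' and is left unproved. Your outline---decomposing the total ring of fractions of the reduced ring $\CR$ as $\prod_i \operatorname{Frac}(\CR_i)$, using integrality of idempotents to split $\ov{\CR}$ as a product, invoking excellence (hence the Nagata property) of $\CR$ for module-finiteness of each $\ov{\CR_i}$, and then doing the bookkeeping on maximal ideals and localizations of a finite product---is exactly the standard argument one would supply, and there are no gaps.
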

We now want to compare the normalization of $\CF$ and the completion of $\ov{\CR}$, so we need to investigate what form $\ov{\CR}_{\frn}$ can take for $\frn \leq \ov{\CR}$ maximal. Since $\ov{\CR}_{\frn} = (\ov{\CR_i})_{\frn'}$ for some $i$ and $\frn' \leq \CR_i$ maximal we assume, that $\CR$ is a domain. The 
following exposition is taken from~\cite[VI.4]{ruiz:power_series} and can be checked for details. Since 
\[
\R = \CR/\frr \subset \ov{\CR}/\frn
\]
is an algebraic field extension it must be $\ov{\CR}/\frn = \C,\R$. We distinguish between the following three cases:
\begin{itemize}
	\item[(a)] $\ov{\CR}/\frn = \R$. Since $\ov{\CR}$ is finitely generated over $\CR$, we can extend a surjection $\R[\bx]_{\frm} \to \CR$ to a surjection $\R[\bx,\by]_{\langle \bx,\by \rangle} \to \ov{\CR}_{\frn}$. Hence $\ov{\CR}_{\frn} \cong \R[\bx,\by]_{\langle \bx, \by \rangle}/J$ and its
	formal completion $(\ov{\CR}_{\frn})^* = \R[[\bx,\by]]/J\R[[\bx,\by]]$ is the $\frn$-adic completion of $\ov{\CR}_{\frn}$.  

	\item[(b)] $\ov{\CR}/\frn = \C$ and $\sqrt{-1} \in Q(\CR)$. Since $\ov{\CR}$ is integrally closed, $\C \subset \ov{\CR}$. Then we get a surjection
	$\C[\bx,\by]_{\langle \bx, \by \rangle} \to \ov{\CR}_{\frn}$ and the formal completion 
	$(\ov{\CR}_{\frn})^* = \C[[\bx,\by]]/J\C[[\bx,\by]]$ is the $\frn$-adic completion of $\ov{\CR}_{\frn}$. 
	
	\item[(c)] $\ov{\CR}/\frn = \C$ and $\sqrt{-1} \notin Q(\CR)$. Now we need to adjoin $\sqrt{-1}$ to $\ov{\CR}$ and we get a unique maximal ideal $\frn'$ in $\ov{\CR}[\sqrt{-1}]$ over $\frn$ and the formal completion $(\ov{\CR}_{\frn})^*$ is considered as the formal completion of $(\ov{\CR}[\sqrt{-1}])_{\frn'}$ as in (b). One needs to take care though since this is not the $\frn$-adic completion of $\ov{\CR}_{\frn}$.  	
\end{itemize}

Now we set $\CF_i := \CF/(\frp_i \CF) = \R[[\bx]]/\frp'\R[[\bx ,]]$, for $i=1,\ldots,s$, which is the formal completion of $\CR_i$.  
\begin{prop}[{{Ruiz, Zariski \cite[Prop.~VI.4.4]{ruiz:power_series}}}]\label{thm:normal}
For any $i=1, \ldots, s$ 
\[
\ov{\CF_i} = [(\ov{\CR_i})_{\frn_{i1}}]^* \times \ldots \times [(\ov{\CR_i})_{\frn_{ik_i}}]^*
\]
and $[(\ov{\CR_i})_{\frn_{ij}}]^* \cong \ov{\CF_i/\frq_{ij}}$, where $\frq_{i1}, \ldots, \frq_{ik_i}$ are the associated primes of $(0)$ in 
$\CF_i$. Additionally
\[
\ov{\CF} = \ov{\CF_1} \times \ldots \times \ov{\CF_s}.
\]
\end{prop}

\begin{remark}
The importance of Proposition~\ref{thm:normal} for us lies in the fact, that $\CF$ is real if and only if $\overline{\CF}$ is real, so we can check realness on completions of local rings of normal varieties and use Theorem~\ref{thm:efroymson} for example.
\end{remark}

\begin{proof}
The only thing missing from the proof in \cite{ruiz:power_series} is to take in account non-domains $\CR$, so we need to check
\[
\ov{\CF} = \ov{\CF_1} \times \ldots \times \ov{\CF_s}.
\]
According to Chevalley's Theorem~\cite[Prop.~VI.2.1]{ruiz:power_series} we have a minimal decomposition
\[
\frp_i \CF = \frq_{i1}' \cap \ldots \cap \frq_{ik_i}',
\]
with $\frq_{ij}$ prime of height $\height \frp_i =: d_i$ and $\frq_{ij} = \frq_{ij}' \CF_i$. It only remains to show, that
\[
(0) = (\frp_1 \cap \ldots \cap \frp_s) \CF = \frq_{11}' \cap \ldots \cap \frq_{1k_1}'  \ldots \ \frq_{s1}' \cap \ldots \cap \frq_{sk_s}'
\]
is a minimal decomposition of $(0)$ in $\CF$, because then
\[
\overline{\CF} = \bigtimes_{i,j} \overline{\CF/\frq_{ij}'} =  \ov{\CF_1} \times \ldots \times \ov{\CF_s}.
\]
Now suppose w.l.o.g 
$\frq_{11}' \supset \bigcap_{i,j \ne 1,1} \frq_{ij}'$.
Then because $\frq_{11}'$ prime, there exists $\frq_{ij}' \subset \frq_{11}$, where clearly $i \ne 1$. If we can show, that 
$\frq_{ij}' \cap \CR = \frp_i$, we are done, since \eqref{eq:dec_zero_r} is a minimal decomposition. 

Assume $\frp_i \subsetneq \fra \coloneqq \frq'_{ij} \cap \CR$. Then since $\fra$ is prime it is $\height \fra > d_i = \height \frp_i$.
Consequently according to Chevalley's Theorem every associated prime of $\fra \CF$ is of height greater $d_i$. Since $\fra \CF \subset \frq'_{ij}$ and $\height \frq'_{ij} = d_i$, this is a contradiction. 
\end{proof}

\section{Real Algebraic Curves}
Now we will apply the theory of the last section to singularities of real algebraic curves. Let $\dim I = 1$, then the analysis of 
$\hat{\CF} = \CF/\sqrt[r]{(0)}$ will be especially satisfying, since the real radical of an associated prime $\frq$ of $I''$ 
will be either $\frq$ itself or the maximal ideal $\frm'' = \frm \R[[\bx]]$ of $\R[[\bx]]$:
\begin{lemma}\label{lm:ideal_height_real}
	Let $\frq \leq \R[[\bx]]$ be a prime with $\height \frq = n-1$. Then 
	\[
	\sqrt[r]{\frq} = \begin{cases}
	\frq & \text{$\frq$ real}\\
	\frm'' & \text{$\frq$ not real}
	\end{cases}
	\]
\end{lemma}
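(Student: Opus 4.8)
The plan is to use the fact that $\sqrt[r]{\frq}$ is the smallest real ideal containing $\frq$, so it suffices to analyze the real ideals lying between $\frq$ and $\R[[\bx]]$. Since $\frq$ is prime of height $n-1$ in the regular local ring $\R[[\bx]]$ of dimension $n$, the quotient $\R[[\bx]]/\frq$ is a one-dimensional local domain, so the only primes containing $\frq$ are $\frq$ itself and the maximal ideal $\frm''$. Hence the only candidates for $\sqrt[r]{\frq}$ are $\frq$ and $\frm''$ (note $\frm''$ is real, since $\R[[\bx]]/\frm'' \cong \R$). If $\frq$ is real, then $\frq$ is already the smallest real ideal containing itself, so $\sqrt[r]{\frq} = \frq$; this is the easy half. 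The remaining task is to show that if $\frq$ is not real, then $\sqrt[r]{\frq} = \frm''$, which since $\sqrt[r]{\frq}$ is a real ideal containing $\frq$ and strictly larger than $\frq$ (by non-realness) and contained in $\frm''$, amounts to showing there is no real ideal strictly between $\frq$ and $\frm''$ — but there is no ideal of any kind strictly between them, so this is immediate once we know $\sqrt[r]{\frq} \neq \frq$.

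First I would spell out why $\frm''$ is real: $\R[[\bx]]/\frm'' \cong \R$ is a field that can be ordered, hence formally real, so $\frm''$ is a real ideal. Next I would invoke Krull dimension theory for the regular local ring $\R[[\bx]]$: a prime $\frq$ of height $n-1$ yields $\dim \R[[\bx]]/\frq = n - (n-1) = 1$ (using that $\R[[\bx]]$ is Cohen–Macaulay, or simply catenary, so heights and coheights add up to $n$), and a one-dimensional local domain has exactly two primes, the zero ideal and the maximal ideal, which pull back to $\frq$ and $\frm''$. Therefore every ideal $J$ with $\frq \subseteq J \subseteq \R[[\bx]]$ and $J$ proper satisfies either $\sqrt{J} = \frq$ or $\sqrt{J} = \frm''$; since $\sqrt[r]{\frq}$ is a radical ideal (real ideals are radical, as $b^2 \in I$ real implies $b \in I$) containing $\frq$, it equals $\frq$ or $\frm''$.

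The only remaining point, and the one requiring a genuine argument, is that $\frq$ not real forces $\sqrt[r]{\frq} \neq \frq$, hence $\sqrt[r]{\frq} = \frm''$. But this is essentially the definition: $\frq$ real means $\sqrt[r]{\frq} = \frq$, so $\frq$ not real means $\sqrt[r]{\frq} \supsetneq \frq$, and by the dichotomy above the only possibility is $\sqrt[r]{\frq} = \frm''$. I expect the main (minor) obstacle to be making sure the height/coheight arithmetic is properly justified — i.e. citing that $\R[[\bx]]$ is a regular, hence Cohen–Macaulay, local ring so that $\height \frq + \dim(\R[[\bx]]/\frq) = \dim \R[[\bx]] = n$ — and being careful that "no ideal strictly between $\frq$ and $\frm''$" is read at the level of primes containing $\frq$, which is all that is needed since $\sqrt[r]{\frq}$ is radical. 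Everything else is a direct unwinding of the definition of the real radical together with standard dimension theory, so the lemma follows quickly.
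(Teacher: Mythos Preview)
The paper states this lemma without proof, treating it as evident. Your argument is correct and supplies exactly the reasoning the paper omits: since $\R[[\bx]]$ is a regular (hence catenary) local ring of dimension $n$, a prime of height $n-1$ has only $\frq$ and $\frm''$ above it; the real radical is a radical ideal contained in the real ideal $\frm''$, so it must equal one of the two, and which one is determined by whether $\frq$ is real. Your self-correction at the end---that ``no ideal strictly between'' should be read at the level of primes, since $\sqrt[r]{\frq}$ is radical---is exactly the right caveat.
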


\begin{lemma}\label{lm:local_ring_normal}
	Let $(A,\frm_A) \subset (B,\frm_B)$ be a finite extension of local rings. Assume $\frm_A B = \frm_B$ and
	\[
	B/\frm_B = A/\frm_A
	\] 	
	Then $A = B$.
\end{lemma}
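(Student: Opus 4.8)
The plan is to use Nakayama's lemma on the finitely generated $A$-module $B/A$. First I would observe that since $A \subset B$ is a finite ring extension, $B$ is a finitely generated $A$-module, hence so is the quotient module $M := B/A$. The goal is to show $M = 0$. Since $(A,\frm_A)$ is local and $M$ is finitely generated over $A$, Nakayama's lemma tells us it suffices to prove $\frm_A M = M$, i.e. $B = A + \frm_A B$.

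To establish $B = A + \frm_A B$, I would use the hypothesis $\frm_A B = \frm_B$ together with the residue field equality $B/\frm_B = A/\frm_A$. Concretely, take any $b \in B$. Its image $\bar b$ in $B/\frm_B$ lies in $A/\frm_A$ by hypothesis, so there is $a \in A$ with $b - a \in \frm_B = \frm_A B$. Hence $b \in A + \frm_A B$, which is exactly what we needed. Then $\frm_A M = M$, and Nakayama gives $M = 0$, i.e. $A = B$.

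One subtlety to be careful about is that Nakayama's lemma in the form needed here requires $M$ to be finitely generated over the \emph{local} ring $A$ (whose Jacobson radical is $\frm_A$); this is guaranteed by the finiteness of the extension $A \subset B$, so no issue arises. The only real obstacle — and it is a mild one — is the bookkeeping of identifying $B/\frm_B$ with $A/\frm_A$ as subsets compatibly, i.e. making precise that the composite $A \hookrightarrow B \twoheadrightarrow B/\frm_B$ is surjective with kernel $\frm_A$ (injectivity on residue fields is automatic, and surjectivity is the content of the displayed hypothesis read correctly). Once that identification is in place the argument is a one-line application of Nakayama, so I expect the write-up to be very short.
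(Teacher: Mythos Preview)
Your proposal is correct and follows essentially the same approach as the paper: show $B = A + \frm_A B$ using the residue field hypothesis and $\frm_B = \frm_A B$, then conclude $A = B$ by Nakayama's lemma (the paper phrases this as applying Nakayama to the finite $A$-module $B$, which amounts to your $M = B/A$ argument). The subtlety you flag about interpreting $B/\frm_B = A/\frm_A$ as surjectivity of $A \to B/\frm_B$ is exactly how the paper uses it as well.
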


\begin{proof}
	For any element $b \in B$, there exists $a \in A$ with
	$b - a \in \frm_B = \frm_A B$ since $A/\frm_A = B/\frm_B$. It follows
	\[
	B = A + \frm_A B. 
	\]
    Since $m_A$ is the Jacobson-radical of $A$ and $B$ is a finite $A$-module, the statement of the lemma follows from the Lemma of Nakayama.
\end{proof}

We can now formulate the main result of this section
\begin{thm}\label{thm:curves}
	Let $\dim I = 1$ and $I$ radical. The origin is a manifold point of $\algrv(I)$ if and only if one of the following two conditions is true
	\begin{itemize}
		\item[(a)] There is exactly one real maximal ideal $\frn \leq \overline{\CR}$ lying over $\frr = \frm \CR$ and $\frn$ is an isolated primary component of $\frr \overline{\CR}$.
		
		\item[(b)] All the maximal ideals $\frn \leq \overline{\CR}$ are not real. In this case the origin is an isolated point of $\algrv(I)$.
	\end{itemize}
\end{thm}

\begin{proof}
	First let $(0) = \frq_1 \cap \ldots \cap \frq_r$ be a primary decomposition in $\CF$. According to 
	Chevalley's Theorem $\CF$ is reduced~\cite[Prop.~2.1]{ruiz:power_series}, hence all the $\frq_i$ are prime. 
	
	Write now $\frq'_i$ for the ideal of $\CF$ with $\CF/\frq_i = \R[[\bx]]/\frq'$.
	According to Theorem~\ref{thm:normal} there exist $\frn_1, \ldots, \frn_q$ maximal in $\overline{\CR}$ 
	with 
	\[
		\overline{\CF/\frq_i} \cong (\overline{\CR}_{\frn_i})^*
	\]
	First we will show the following statement
	\begin{equation}\label{eq:st_proof_curves}
	\CF/\frq_i \text{ real} \Leftrightarrow \frn_i \text{ real}.
	\end{equation}
	Clearly $\overline{\CF/\frq_i}$ is real if and only if $\CF/\frq_i$ is real, since they are contained in the quotient field of $\CF/\frq_i$, so we need to show, that $(\overline{\CR}_{\frn_i})^*$ is real if and only if $\frn_i$ is real. 
	
	If $\frn_i$ is not real, then $\overline{\CR}/\frn_i \cong \C$ and one can see from the construction before Theorem~\ref{thm:normal} that $(\overline{\CR}_{\frn_i})^*$ will not be real (since $\C \subset (\overline{\CR}_{\frn_i})^*$).
	
	On the other hand let $\frn_i$ be real, then $(\overline{\CR}_{\frn_i})^*$ will be the $\frn_i \overline{\CR}$-adic completion of the local ring $\overline{\CR}_{\frn_i}$. Since $\overline{\CR}$ is normal of dimension $1$, we also know, that
	$\overline{\CR}_{\frn_i}$ is regular according to Serre's regularity criterion $R_1$ \cite[Theorem~39]{matsumura:comm_alg}.
	Then $(\overline{\CR}_{\frn_i})^*$ is regular too, with residue field $\overline{\CR}/\frn_i = \R$. Now $(\overline{\CR}_{\frn_i})^*$ must be real because of \cite[Prop. 2.7]{lam:real_alg}.
	
	We consider now
	\begin{equation}\label{eq:rera_dec}
	\sqrt[r]{I''} = \sqrt[r]{\frq_1'} \cap \ldots \cap \sqrt[r]{\frq_i'}, 
	\end{equation}
	where $\frq_i'$ is the preimage of $\frq_i$ in $\R[[\bx]]$. As one checks easily $\frq_i$ is real if and only if $\frq_i'$ is real.
	
	If none of the $\frn_i$ is real, then none of the $\frq_i'$ is real and according to Lemma~\ref{lm:ideal_height_real}, we would get
	$\sqrt[r]{I''} = \frm''$ from \eqref{eq:rera_dec} and $\hat{\CF} = \R[[\bx]]/\sqrt[r]{I''} \cong \R$ is regular. Since $\hat{I} = \sqrt[r]{I'} = \sqrt[r]{I''} \cap \R\{\bx\}$ (see the proof of Proposition~\ref{prop:compare}~(e)) we would also have $\hat{I} = \frm' = \frm \R\{\bx\}$ and one checks easily with Definition~\ref{def:hat_i}, that the origin must be an isolated point of $X = \algrv(I)$.
	
	If two of the $\frn_i$ are real, then $\hat{\CF} = \R[[\bx]]/\sqrt[r]{I''}$ would not be a domain and therefore not regular. 
	Then the origin cannot be a manifold point of $X$ according to Theorem~\ref{thm:mani_reg}. 
	
	Now we investigate the case, that exactly one $\frn_i$ is real, w.l.o.g. we choose $\frn_1$ real. Then $\sqrt[r]{I''} = \frq_1$.
	We have the following commutative diagram:
	
	\begin{equation}\label{diag:curves_proof}
	\begin{tikzcd}
	\overline{\CR}_{\frn_1} \ar{rr}{\psi} & & \left (\overline{\CR}_{\frn_1} \right)^* \ar{d}{\cong}[swap]{\eta}  \\
	\overline{\CR} \ar{u}{l} \ar{r} & \overline{\CF} \ar{r}& \left( \overline{\CF/\frq_1}\right) \ar{u}	\\
	\CR \ar{u} \ar{r} & \CF \ar{r} \ar{u} & \CF/\frq_1 \ar{u}{\iota}
	\end{tikzcd}
	\end{equation}
	First we assume that $\frn_1$ is an isolated primary component of $\frr \overline{\CR}$. We proceed in several steps.
	
	(1) $\gamma(\frr) \overline{\CR}_{\frn_1} = \frn_1 \overline{\CR}_{\frn_1}$, where $\gamma \colon \CR \to \overline{\CR}_{\frn_1}$.
	Since $\frn_1$ is an isolated prime of $\frr \overline{\CR}$ we find a minimal primary decomposition
	\[
	\frr \overline{\CR} = \frn_1 \cap \frs_2 \cap \ldots \cap \frs_k.
	\]
	For any $x \in \frn_1 \overline{\CR}_{\frn_1}$ we have
	$x = a \cdot \frac{p}{q}$ with $p,q \in \overline{\CR}$, $a \in \frn_1$ and $q \notin \frn_1$. Now choose 
	$f_i \in \frs_i \backslash \frn_1$. Then 
	\[
	b \coloneqq a \cdot f_2 \cdots f_k \in \frr \overline{\CR}
	\]
	and
	\[
	x = a \cdot \frac{p}{q} = b \cdot \frac{1}{f_2} \cdots \frac{1}{f_k} \cdot \frac{p}{q} \in \gamma(\frr) \overline{\CR}_{\frn_1}.
	\]
	
	(2) $\iota(\fro)$ generates the maximal ideal of $\overline{\CF/\frp_1}$, where $\fro$ is the maximal ideal of $\CF/\frq_1$. 
	Since $\psi$ is the $\frn_1 \overline{\CR}_{\frn_1}$-adic completion of $\overline{\CR}_{n_1}$ we know that $\psi(\gamma(\frr))$ generates the maximal ideal of $(\ov{\CR}_{\frn_1})^{*}$. But $\psi(\gamma(\frr)) = \eta(\iota(\fro))$  and we conclude that $\iota(\fro)$ generates the maximal ideal of $\overline{\CF/\frp_1}$.
	
	(3) $\CF/\frp_1$ is regular. We have already seen, that the residue field of $(\overline{\CR}_{\frn_1})^*$ is $\R$, hence the same is true of the residue field of $\ov{\CF/\frp_1}$. Also we know, that $\ov{\CF/\frp_1}$ is finite over $\CF/\frp_1$ \cite[Prop.~III.2.3]{ruiz:power_series} and in (2) we have seen, that the maximal ideal of $\CF/\frp_1$ generates the maximal ideal of $\overline{\CF/\frp_1}$. 
	
	Now we are exactly in the situation of Lemma~\ref{lm:local_ring_normal} with $A = \CF/\frp_1$ and $B = \overline{\CF/\frp_1}$. It follows $\ov{\CF/\frp_1} = \CF/\frp_1$. Then $\CF/\frp_1$ is a normal local ring of dimension at most $1$. With Serre's regularity criterion $R_1$, we see, that $\hat{\CF} = \CF/\frp_1$ is regular and according to Theorem~\ref{thm:mani_reg} the origin must be a manifold point of $X = \algrv(I)$.

	Now suppose on the contrary that $\CF/\frp_1$ is regular and $\frn_1$ is not an isolated primary component of $\frr \overline{\CR}$.
	Since $\CF/\frp_1$ is regular, it is a Cohen Macaulay domain. It fulfills $S_2$ and $R_1$ and is normal by Serre's normality criterion~\cite[Theorem~39]{matsumura:comm_alg}. Therefore $\CF/\frp_1 = \overline{\CF/\frp_1} \cong (\overline{\CR}_{\frn_1})^*$.
	
	Let $\frb$ be the ideal generated by $\gamma(\frr)$ in $\overline{\CR}_{\frn_1}$. Because diagram~\eqref{diag:curves_proof} commutes and $\iota$ is an isomorphism we have that $\psi(\frb)$ generates the maximal ideal $\fra$ of $(\overline{\CR}_{\frn_1})^{*}$. But since $\psi$ is faithfully flat we have
	\[
	\frb = \psi(\frb)(\overline{\CR}_{\frn_1})^* \cap \overline{\CR}_{\frn_1} = \fra \cap \overline{\CR}_{\frn_1} = \frn_1 \overline{\CR}_{\frn_1}. 
	\]
	Since $\frn_1$ is not an isolated primary component of $\frr \overline{\CR}$, there exists a primary ideal
	$\frs$ with $\frr \overline{\CR} \subset \frs \subsetneq \frn_1$ (remember that $\sqrt{\frr \overline{\CR}}$ is the intersection of all maximal ideals of $\overline{\CR}$). 
	But $\frn_1 \overline{\CR}_{\frn_1} = \frb = \langle l(\frr \overline{R}) \rangle \subset \frs \overline{\CR}_{\frn_1}$. Therefore
	\begin{equation}\label{eq:proof_curve}
	\frs \overline{\CR}_{\frn_1} = \frn_1 \overline{\CR}_{\frn_1}.
	\end{equation}
	Now choose $r \in \frn_1 \backslash \frs$. Because of \eqref{eq:proof_curve} there exist $p,q,s \in \overline{\CR}$ with $q \notin \frn_1$, $s \in \frs$ and 
	\[
	\frac{s \, p}{q} = r
	\]
	Thus there is $q' \notin \frn_1$ with $q'\,r \, q = q'\,c \, p \in \frs$, which is primary. Because $r \notin \frs$ it must be $(q\,q')^k \in \frs \subset \frn_1$. But then $q \, q' \in \frn_1$, a contradiction. This completes the proof.
\end{proof}

\begin{ex}
We can test the conditions of Theorem~\ref{thm:curves} with all CAS which have a normalizing algorithm for polynomial rings implemented.
Consider the following run in \texttt{Singular} to test whether the origin is a manifold point of $X = \algrv(y^3 + 2yx^2 - x^4)$:
\begin{lstlisting}[columns=fullflexible, basicstyle=\footnotesize, language=bash]
> ideal I = y^3 + 2*y*x^2 - x^4;
> def nor = normal(I);
> def S = nor[1][1];
> setring S;
> ideal M = norid + ideal(x,y);
> primdecGTZ(M);
[1]:
   [1]:
      _[1]=T(2)
      _[2]=y
      _[3]=x
      _[4]=-T(2)^2+T(1)-2
   [2]: -- same
[2]:
   [1]:
      _[1]=T(2)^2+2
      _[2]=y
      _[3]=x
      _[4]=-T(2)^2+T(1)-2
   [2]: -- same
\end{lstlisting} 
\end{ex}
With $A = \R[x,y]/\langle y^3 + 2yx^2 - x^4 \rangle$, we see, that $\frm \overline{A} = \frn_1' \cap \frn_2'$, where $\frn_1'$ is real and $\frn_2'$ not.
It follows easily, that there is exactly one real ideal $\frn_1$ lying over $\frr \overline{\CR}$ and $\frn_1$ is an isolated prime
of $\frr \overline{\CR}$. From Theorem~\ref{thm:curves} we deduce, that the origin is a manifold point of $X$.

\section{C-Space Singularities of the Four Bar mechanism}\label{sec:four_bar}
Recently efforts have been made in the kinematics community to define and categorize kinematic singularities of linkages in a rigorous way \cite{muller:local_geometry}, \cite{muller:higher_order}, \cite{muller:sing_conf}. It has been observed~\cite[Ex. 6.3.4]{muller:sing_conf} that closed 6R-chains exist with rank drop in the constraint equation but smooth configuration spaces nevertheless. This makes it necessary to decide for singular points in the configuration space whether it is a C-Space Singularity, which are defined as non-manifold points of the configuration space~\cite{muller:sing_conf}. Compare also \cite[p. 227]{piippo:planar}, where this question is asked for some well known planar linkages. 

In this section we would like to apply some of the theory developed so far to the example of Four Bar Linkages.
Conditions on the Design Parameters such, that there exists points with a rank drop in the constraint equations are well known, see e.g. \cite{piippo:planar}, \cite{bottema:theoretical_kin} (Grashof Criterion). Lesser known are methods to show, that these points are C-Space Singularities, i.e. non-manifold points. We will be able to show this for all mechanism in the class of singular four bars with computational methods.


The Four Bar mechanism is one of the oldest and most widely used planar mechanism in Kinematics and Mechanical Engineering. It is also one of the first examples, where singularities in the configuration space were described and analyzed in a systematic way \cite{gosselin:singularities}. In its basic form it consists of four bars connected in a circular arrangement by rotational joints with one bar fixed to the ground:
\begin{center}
	\begin{tikzpicture}[scale=3.5,
	gelenk/.style={circle, draw=blue!50, fill=white, thick, inner sep=0pt,minimum size=1.6mm},
	akt_gelenk/.style={circle, draw=blue!50, fill=white!80, thick, inner sep=0pt,minimum size=2.2mm}]

	\draw[->] (0,0) -- (1.5,0) node[right,fill=white] {};
	\draw[->] (0,0) -- (0,1) node[above,fill=white] {};

	\node[akt_gelenk,label=below:{\scriptsize $A$}] (base) at (0,0) {}; 
	\node[akt_gelenk,label=below:{\scriptsize $B$}] (base2) at (1,0) {};
	\node[gelenk,label={[yshift=2.8pt,xshift=-1pt]below right:{\scriptsize $(x,y)$}}] (g1) at (0.171301045, 0.46974030) {} ;
	\node[gelenk,label={[yshift=2.8pt,xshift=-1pt]below right:{\scriptsize $(u,v)$}}] (g2) at (1.0192522, 0.9998146) {} ;
	
	\node[gelenk] (b2) at (-0.24954, 0.4332779)    {};
	\node[gelenk] (c2) at (0.62099022, 0.92539266) {};
	
	\draw[-,thick] (base) -- node[right,xshift=2pt] {\scriptsize $l_2$} (g1) -- node[below,yshift=-3pt] {\scriptsize $l_4$} (g2) -- node[right, xshift=1pt]{\scriptsize $l_3$} (base2);

	\draw[-,thick, dashed] (base) -- (b2) -- (c2) -- (base2);	
	\end{tikzpicture}
\end{center}

The configuration space, defined as the set of all possible assembly configuration, can be represented
by the real algebraic set $X = \algrv(I)$, where $I = \langle p_1,p_2,p_3 \rangle \leq \R[x,y,u,v]$ is generated by the polynomials
\begin{align*}
p_1 & = x^2 + y^2 - l_2^2,\\
p_2 & = (u - 2)^2 + v^2 - l_3^2,\\
p_3 & = (u - x)^2 + (v - y)^2 - l_4^2.
\end{align*}
$l_2,l_3,l_4$ are the parameters of the four bar which are assumed to be positive real numbers. We fixed the length $l_1 = |AB| = 2$ of the ground bar, since any other length can be treated by scaling the system. 
\paragraph{Dimension of $I$}
We will assume $l_2 \ne 2$, $l_4 \ne 2$, since the complementary case can be analyzed in the same way.
Now we calculate a pseudo Gr\"obner basis of $I$ with respect to the polynomial ordering $(dp(2),dp(2))$ and the enumeration $v,y,u,x$. We can do all the calculations in $B = \Q(l_2,l_3,l_4)[x,y,u,v]$ but we have to be careful to avoid dividing by elements of $\Q(l_2,l_3,l_4) \backslash \Q$ in all Gr\"obner base calculations, since these could be zero for valid parameters $l_2,l_3,l_4$. In \texttt{Singular} we can achieve this by setting
\texttt{option(intStrategy)} and \texttt{option(contentSB)}. 
 
We get $6$ polynomials $g_1, \ldots, g_6$, with the leading terms
\begin{alignat*}{2}
\leadt(g_1) & = -16\,u^2\,x & \leadt(g_4) & =  y^2 \\
\leadt(g_2) & = (-2\,l_2^2+8)\,v\,u \qquad & \leadt(g_5) & = 2\,v\,y \\
\leadt(g_2) & = -2\,v\,x^2 & \leadt(g_6) & = v^2
\end{alignat*}
According to Exercise~2.3.8 of \cite{greuel:singular_commutative} $\{g_1, \ldots, g_6\}$ is a Gr\"obner basis of $I$ as long as $l_2 \ne \pm 2$ which we assumed in the beginning but then we can calculate the dimension of $I$ with
\[
\dim I = \dim \, \langle u^2\,x, v\,u, v\,x^2, y^2, v\,y, \, v^2 \rangle. 
\]
With a simple combinatorial argument \cite[Prop. 9.1.3]{cox:ideals} we see, that the dimension of the right ideal is $1$ and consequently $\dim I = 1$. 
Since $I$ can be generated by the $3$ elements $p_1,p_2,p_3$, $A := \R[x,y,u,v]/I$ must be a complete intersection ring and consequently equidimensional Cohen-Macaulay~\cite[Proposition~18.13]{eisenbud:comm_alg}.

\paragraph{Singular Locus}
According to \cite{piippo:planar} there only exist singular points in $X$, iff 
\[
l_2 \pm l_3 \pm l_4 = 2.
\]
We restrict our investigation to the case $l_2 - l_3 + l_4 = 2$, i.e. $l_3 = l_2 + l_4 - 2 > 0$, since other cases can be handled in a similar way. Since $\dim I = 1$ equidimensional we need to analyze the ideal $J$ generated by $I$ and all the $3$-minors of the jacobian of $(p_1, p_2, p_3)$. With a \texttt{Singular} Gr\"obner base computation we get $J = \langle s_1, s_2,s_3,s_4 \rangle$, with
\begin{align*}
s_1 & = q_1(l_2,l_4)\,x + c_1(l_2,l_4) \\
s_2 & = q_2(l_2,l_4)\,u + r_2(l_2,l_4)\,x + c_2(l_2,l_4) \\
s_3 & = q_3(l_2,l_4)\,y \\
s_4 & = q_4(l_2,l_4)\,v + f(l_2,l_4,x,y,u),
\end{align*}
where all the coefficients are polynomials in $l_2,l_4$ or $l_2,l_4,x,y,u$ respectively.
We need to carefully examine the coefficients of the leading monomials of the $s_i$ to make sure that $\{s_1,s_2,s_3,s_4\}$ is a Gr\"obner basis of $J$ in $A$. 

A quick calculation in \texttt{Singular} shows:
\begin{align*}
q_1(l_2,l_4) & = l_4^2 \cdot (l_4-2) \cdot (l_2+l_4) \cdot (l_2+l_4-2)^2 \cdot (l_2 + 2)^2 \cdot l_2 \cdot (3\,l_2 - 8), \\
q_2(l_2,l_4) & = l_4^2 \cdot (l_2+2 \,l_4-2) \cdot (l_2+l_4-2)^2 \cdot (l_2+2)^2 \cdot (l_2 - 2), \\
q_3(l_2,l_4) & = l_4^2 \cdot (l_2 + l_4 - 2)^2 \cdot (l_2 + 2), \\
q_4(l_2,l_4) & = l_2^2 \cdot (l_2 + 2) \cdot (l_2 - 2),
\end{align*}
Taking in account our assumptions, that $l_4,l_2 > 0$, $l_2 + l_4 - 2 = l_3 > 0$, $l_2 \ne 2$, $l_3 \ne 2$ and in addition $l_2 \ne \frac{8}{3}$ (which we will also need to check separately), we see, that none of the $q_i$ will vanish and $s_1, s_2,s_3,s_4$ forms a Gr\"obner basis of $J$ for all possible values of $l_2, l_4$.
Clearly then $\dim J = 0$ and since $A$ is Cohen-Macaulay, we can infer from \cite[Theorem~18.15]{eisenbud:comm_alg}, that $I$ must be a radical ideal. But then the Singular Locus of $I$ is given by all the prime ideals containing $J$. 

We now set $p = (l_2, 0, l_2 + l_4,0) \in \R^4$. As we can check quickly by substitution in $(s_1,s_2,s_3,s_4)$, we have $J \leq \frm_p$, so $p$ is the only singularity of $X_\C = \algv(I)$. 

\paragraph{Manifold Points} To check whether $p$ is a non manifold point with Theorem~\ref{thm:curves} we need to calculate the integral closure $C$ of
$A_{\frm_p}$. We could do this by applying the normalization algorithm described in \cite{greuel:singular_commutative} and implemented in \texttt{Singular} but it has proven difficult to check the validness of the Gr\"obner base calculations in each step for the considered values of $l_2,l_4$. We could still analyze the situation for generic values of $l_2,l_4$ but we want a statement for all valid values.

Instead we will determine the blow up $\pi \colon \tilde{X} \to X$ at $p$, since $\tilde{X}$ will be nonsingular after one blow up and is then the normalization of $X$.

First we move $p$ to the origin and consider $I_{\textrm{bl}} = \langle p'_1,p'_2,p'_3,b_1,b_2,b_3,b_4 \rangle \leq \R[x,y,u,v,\hat{x}, \hat{y},\hat{u},\hat{v}]$ given by
\begin{align*}
p'_1& = p_1(x + l_2,y,u + l_2 + l_4,v) = x^2+y^2+ 2 l_2 x, \\
p'_2& = p_2(x + l_2,y,u + l_2 + l_4,v) = u^2+v^2+ (2 l_2 + 2 l_4-4) u, \\
p'_3& = p_3(x + l_2,y,u + l_2 + l_4,v) = x^2+y^2-2 x u+u^2-2 y v+v^2 - 2 l_4 x + 2 l_4 u,
\end{align*}
and the homogeneous polynomials
\begin{alignat*}{2}
b_1 & = x\,\hat{y}  - y\, \hat{x}, \qquad \qquad& b_4 &= y\, \hat{u} - u \, \hat{y},\\
b_2 &= x\, \hat{u} - u\, \hat{x},  & b_5 &= y\,\hat{v} - v \, \hat{y},\\
b_3 &= x\, \hat{v} - v \, \hat{x}, & b_6 &= u\, \hat{v} - v \, \hat{u}.
\end{alignat*}
Then we go to the chart $\hat{y} = 1$ and get the system
\begin{align*}
p''_1 & = y \cdot (y \, \hx^2+y+(2 l_2) \hx), \\
p''_2 & = y \cdot (\hu^2+y \hv^2+(2 l_2+2 l_4-4)) \hu,\\
p''_3 & = y \cdot (y \,\hx^2-2 y \hx \hu+y \hu^2+y \hv^2-2 y \hv+y+(-2 l_4) \hx+(2 l_4) \hu).
\end{align*}
We set $I_y := \langle p''_1/y, p''_2/y, p''_2/y \rangle \leq \R[\hx,y,\hu,\hv]$. To get the equations of the strict transform on this chart, we need to remove the exceptional divisor, so we have to calculate the saturation 
\[
J := (I_y : \langle y \rangle^{\infty}).
\]
This can easily be achieved with the command \texttt{sat} in \texttt{Singular}. But again we have to be careful to check whether the Gr\"obner basis calculations stays valid for all assumed values for $l_2,l_4$, so we will calculate the saturation manually.
First we calculate $I_y \cap \langle y \rangle$, which we get by eliminating $t$ of 
\[
I_y \, t + \langle (1-t)y \rangle  
\] 
Now we divide any generator of $I_y \cap \langle y \rangle$ by $y$ and after checking, that all coefficients of the leading monomials won't be zero after substitution of values for $l_2,l_4$ we normalize the generators and get the following Gr\"obner basis of $J = (I_y:\langle y \rangle)$:
\scriptsize
\begin{align*}
f_1 & =\hu^2  \hx^2+\frac{-2  l_2-2  l_4}{l_2+2}  \hu  \hx^3+ \frac{l_2^2+2  l_2  l_4+ l_4^2}{l_2^2+4  l_2+4}  \hx^4+\frac{l_2^2-4  l_2+4}{l_2^2+4  l_2+4}  \hu^2+ \frac{-2  l_2^2-2  l_2  l_4+4  l_2-4  l_4}{l_2^2+4  l_2+4}  \hu  \hx+\frac{l_2^2+2  l_2  l_4+ l_4^2}{l_2^2+4  l_2+4}  \hx^2, \\
f_2 & =y  \hx^2+y+(2 \, l_2)  \hx, \\
f_3 & =y  \hu^2-y  \hu  \hx+ \frac{l_2^2+4  l_2+4}{4}  \hu^2  \hx+ \frac{- l_2^2- l_2  l_4-2  l_2-2  l_4}{2}  \hu  \hx^2+ \frac{l_2^2+2  l_2  l_4+ l_4^2}{4}  \hx^3, \\
f_4 & = \hv  \hx+ \frac{l_2+2}{2  l_2}  \hu  \hx^2+ \frac{- l_2- l_4}{2  l_2}  \hx^3+ \frac{- l_2+2}{2  l_2}  \hu + \frac{- l_2- l_4}{2  l_2}  \hx, \\
f_5 & = \hv  \hu+ \frac{-3  l_2^2-3  l_2  l_4+6  l_2-2  l_4}{ l_2^2-4  l_2+4}  \hv  \hx+ \frac{ l_2^2+4  l_2+4}{2  l_2^2-4  l_2}  \hu^2  \hx^3+ \frac{- l_2^2- l_2  l_4-2  l_2-2  l_4}{ l_2^2-2  l_2}  \hu  \hx^4+  \frac{ l_2^2+2  l_2  l_4+ l_4^2}{2  l_2^2-4  l_2}  \hx^5+ \frac{3  l_2^2+4}{2  l_2^2-4  l_2}  \hu^2  \hx \\ & \quad + \frac{-4  l_2^3-4  l_2^2  l_4+6  l_2^2-2  l_2  l_4+4  l_2+4  l_4}{ l_2^3-4  l_2^2+4  l_2}  \hu  \hx^2+ \frac{5  l_2^3+10  l_2^2  l_4-10  l_2^2+5  l_2  l_4^2-12  l_2  l_4-2  l_4^2}{2  l_2^3-8  l_2^2+8  l_2}  \hx^3+ \frac{2  l_2^2+4  l_2  l_4-4  l_2+2  l_4^2-4  l_4}{ l_2^2-4  l_2+4}  \hx, \\
f_6 & = \hv y+y  \hu  \hx+ \frac{-2  l_2- l_4+2}{2  l_2} y  \hx^2+ \frac{-2  l_2- l_4+2}{2  l_2} y+ ( l_2-2)  \hu+(- l_2+2)  \hx, \\
f_7 &= \hv^2+ \frac{-2  l_2-2  l_4+4}{ l_2-2}  \hv+ \hu^2+ \frac{-2  l_2-2  l_4+4}{ l_2-2}  \hu  \hx+ \frac{ l_2^2+2  l_2  l_4-2  l_2+ l_4^2-2  l_4}{ l_2^2-2  l_2}  \hx^2+ \frac{ l_2^2+2  l_2  l_4-2  l_2+ l_4^2-2  l_4}{l_2^2-2 l_2},
\end{align*}
\normalsize
If we try to repeat the process we get the same ideal, hence $J = (I:\langle y \rangle^{\infty})$ is the ideal of the strict transform of $X$ on the chart $\hy = 1$. Now we check, that the ideal of the $3$-minors of the jacobian of $(f_1, \ldots, f_7)$ is the whole ring and $\tilde{X}$ nonsingular. We can do this as before in the calculation of the singular locus of $X$.

Now we need to identify all points $q$ in the fiber over the origin, so we calculate a pseudo Gr\"obner basis of 
$J + \langle y \rangle$ and get
\begin{align*}
g_1 &= (2 \, l_2) \, \hx, \\
g_2 &= (l_2-2) \,  \hu + (-l_2+2) \, \hx, \\
g_3 &=  y, \\
g_4 &=\hv^2+ \frac{-2  l_2-2  l_4+4}{ l_2-2} \,  \hv+\hu^2+\frac{-2  l_2-2  l_4+4}{ l_2-2} \, \hu \hx \\ &\quad + \frac{ l_2^2+2  l_2  l_4-2  l_2+ l_4^2-2  l_4}{ l_2^2-2  l_2} \hx^2+\frac{ l_2^2+2  l_2  l_4-2  l_2+ l_4^2-2  l_4}{ l_2^2-2  l_2}
\end{align*}
After checking, that this is a Gr\"obner base for all assumed values of $l_2,l_4$ we substitute $\hx=0$, $\hu=0$ from $g_1,g_2$ into $g_4$ and multiply with $(l_2^2 - 2 \,l_2)$. Then we get
\[
g'(\hv) = (l_2^2 - 2\,l_2) \, \hv^2 - l_2\,(2\,l_2 - 2\,l_4 + 4) \hv + (l_2^2+2  l_2  l_4-2  l_2+ l_4^2-2  l_4).
\]
$g'$ is a quadratic equation in $\hv$ with discriminant 
\[
8 \, l_2 \, l_4 \, (l_2 + l_4 - 2) = 8 \, l_2 \, l_3 \, l_4 > 0. 
\]
Consequently all points lying over the origin in the chart $\hy = 1$ are real. One can check analogously that for all other charts the same points (if any) are lying over the origin and therefore the extension $I_{p} \, \R[[x,y,u,v]]$ must be real too, where $I_p$ is the translated ideal. It follows, that $(l_2,0,l_2 + l_4,0)$ is not a manifold point of $X$.

\section{Higher Dimensions}\label{sec:higher_dim}
If $X$ is of dimension greater than one, it is difficult in general to analyze $\hat{I}$, since we can't effectively compute in the ring of power series. However we can use a criterion by Efroymson to check if $I''$ is real:
\begin{thm}[Efroymson~\cite{efroymson:local_reality}]\label{thm:efroymson}
Let $I$ be a real prime and $\CR$ integrally closed. $I''$ is real, if and only if the origin is contained in the euclidean closure of the real nonsingular points of $X$. 
\end{thm}
We can use this criterion in many cases to decide if a singular real point of $X$ is a non-manifold point, since according to~Corollary~\ref{cor:real_mani} it is enough to show, that the extension of $I''$ in $\R[[\bx]]$ is real at this point. We want to demonstrate this on the configuration space $X$ of the 3RRR-parallel linkage from figure~\ref{fig:3rrr}, but the same arguments can be used for a large class of linkages.
\begin{figure}
		\centering
		\begin{tikzpicture}
		[scale=1.5, joint/.style={circle, inner sep=0pt, draw=blue!50, fill=white, minimum size=1.6mm},
		ajoint/.style={circle,inner sep=0pt, draw=blue!50, fill=white, minimum size=2.2mm},
		sjoint/.style={circle,inner sep=0pt, fill=black, minimum size=1mm}]

\def\jointbox{ +(0.18,-0.08) arc (0:180:0.18) -- cycle} 
\def\jointboxdown{ +(-0.18,0.08) arc (180:360:0.18) -- cycle}  
\def\sc{1.6}
\def\ca{(0.55635083*\sc,0.830947501*\sc)}
\def\cb{(-0.4114378*\sc,0.9114378*\sc)}
\def\cc{(0.99752066*\sc,0.070374129410*\sc)}
\def\wp{30}
\def\wpp{90}

\def\cp{(1.5*\sc,0.5*\sc)}

\def\trfix{ ++(-60:0.18) -- ++(-0.18,0) -- cycle}     
\def\ground{ ++(-120:0.18) ++(0.01,0) -- +(-120:0.1) ++(0.09,0) -- +(-120:0.1) ++(0.08,0) -- +(-120:0.1)}

\draw[->] (0,0) -- (4,0) node[right,fill=white] {\small $x$};
\draw[->] (0,0) -- (0,2) node[above,fill=white] {\small $y$};

\draw[thick, black] (0*\sc,0*\sc) node[right, xshift=7pt, yshift=7pt] {\scalebox{.91}{$\theta_1$}} -- \ca node[left, xshift=-9pt, yshift=2pt] {\scalebox{.91}{$\theta_2$}} coordinate(a) -- \cp coordinate (b) -- +(\wp:1*\sc) node[right, xshift=3pt, yshift=0pt]{\scalebox{.91}{$\theta_7,\, \theta_9$}} coordinate (c) -- +(\wpp:1*\sc) node[above right, xshift=3pt, yshift=-1pt]{\scalebox{.91}{$\theta_4$}} coordinate (d) -- (b);
\draw (0.7,0) arc (0:56:0.7);

\draw[thick] (1*\sc,0*\sc) coordinate (A3)-- +\cc node[right, xshift=3pt, yshift=2pt] {\scalebox{.91}{$\theta_8$}}coordinate(e) -- (c);
\draw[thick] (1*\sc,1*\sc) coordinate (A2)-- +\cb node[right, xshift=3pt, yshift=6pt] {\scalebox{.91}{$\theta_5$}}coordinate(f) -- (d);

\draw[line width=1.5pt] (0,0) -- (a) -- (b);
\draw[line width=1.5pt] (A2) -- (f) -- (d);
\draw[line width=1.5pt] (A3) -- (e) -- (c);

\draw[-] ($(a) + (0.26,0)$) arc (0:340:0.26);
\draw[dashed] (a) -- +(0.5,0);
\draw[-] ($(b) + (0.5,0)$) arc (0:30:0.5) node[right, xshift=1pt, yshift=-4pt] {\scalebox{.91}{$\theta_6$}};
\draw[dashed] (b) -- +(0.8,0);
\draw[-] ($(b) + (0.26,0)$) arc (0:90:0.26) node[right, xshift=1pt, yshift=3pt] {\scalebox{.91}{$\theta_3$}};



\node at (0,0) [ajoint] {};
\node at (a) [joint] {};
\node at (b) [joint] {};
\node at (c) [joint] {}; 
\node at (d) [joint] {}; 
\node at (e) [joint] {};
\node at (f) [joint] {};
\node at (A2) [ajoint] {}; 
\node at (A3) [ajoint] {};

\end{tikzpicture}\caption{a plane 3RRR-mechanism}\label{fig:3rrr}
\end{figure}
As in \cite{piippo:planar}, we set $\cos(\theta_i) = c_i$, $\sin(\theta_i) = s_i$, then $X$ is the real zero set of $I = \langle p_1, \ldots, p_{15} \rangle \leq  \R[\{c_i,s_i \mid i=1,\ldots,9\}]$, where
			\begin{align*}
			p_1 &= c_1 + c_2 + c_3 + c_4 + c_5  - 1;\\
			p_2 &= s_1 + s_2 + s_3 + s_4 + s_5  - 1;\\
			p_3 &= c_1 + c_2 + c_6 + c_7 + c_8  - 1;\\
			p_4 &= s_1 + s_2 + s_6 + s_7 + s_8;\\
			p_5 &= c_6 + c_9 - c_3\\
			p_6 &= s_6 + s_9 - s_3\\
			p_{6+i} &= c_i^2 + s_i^2 - 1, \quad i=1,\ldots,9.
			\end{align*}

We can check with \texttt{Singular}, that $\dim I = 3$, but the ideal $J$ generated by $I$ and the $15$-minors of the jacobian of $(p_1, \ldots, p_{15})$ has dimension $0$, which can be confirmed by analyzing the dimensions of $I + J_k$, where $J_1, \ldots, J_s$ are all the ideals given by the factorizing Gr\"obner Base algorithm (\texttt{facstd} in \texttt{Singular}). We also know, that the coordinate ring $A = \R[\{c_i,s_i\}]/I$ is a complete intersection ring, since $I$ is generated by $15$-elements, but then $A$ is Cohen Macaulay and we conclude:
\begin{itemize}
	\item[(1)] $I$ is equidimensional and radical, \cite[Cor. 18.14, Theorem~18.15]{eisenbud:comm_alg}.
	\item[(2)] The singular locus of $X$ is zero-dimensional, which follows from (1) and the general Jacobian criterion~\cite[Thm. 5.7.1]{greuel:singular_commutative}. 
	\item[(3)] $A$ is a normal ring, \cite[Theorem~18.15]{eisenbud:comm_alg},
	\item[(4)] All the components of $X$ are disjoint, since the singular intersection of components would have codimension $\geq 1$ according to Hartshorne's Connectedness Theorem~\cite[Thm. 18.13]{eisenbud:comm_alg}.
\end{itemize}  
Now at any point, the local ring is the local ring of an irreducible, normal, affine $\R$-variety, because of (4) and (3), and we can apply Efroymson's Criterion. We see at any point $p$ of $X$, that the extension of $I$ to the power series ring is real, if and only if $p$ is not isolated in the set of nonsingular real points of $X$. But the singular locus is of dimension $0$ (2) and we only need to check that a singularity is not isolated in $X$, to prove that it is not a manifold point. Since $X$ is given as the configuration space of a linkage we can often achieve this by geometric arguments. For example, in the following singular configuration of the mechanism, one of the legs can rotate freely, so $X$ is not a manifold there: 
\vspace{0.1cm}

\begin{figure}[h]
\begin{minipage}{0.5\textwidth}
\begin{center}
\begin{tikzpicture}
			[scale=1.1,joint/.style={circle,inner sep=0pt, draw=blue!50,fill=white, minimum size=1.3mm},
			ajoint/.style={circle,inner sep=0pt, draw=blue!50,fill=white, minimum size=2.2mm},
			sjoint/.style={circle,inner sep=0pt, fill=black, minimum size=1mm}]
			\def\r32{0.8660254}
			\def\sc{1.6}
			\def\jointbox{ +(0.18,-0.08) arc (0:180:0.18) -- cycle} 
			\def\jointboxdown{ +(-0.18,0.08) arc (180:360:0.18) -- cycle}  
			
			\def\trfix{ ++(-60:0.18) -- ++(-0.18,0) -- cycle}     
			\def\ground{ ++(-120:0.18) ++(0.01,0) -- +(-120:0.1) ++(0.09,0) -- +(-120:0.1) ++(0.08,0) -- +(-120:0.1)}

			\draw[->] (0,0) -- (4,0) node[right,fill=white] {\scriptsize $x$};
			\draw[->] (0,0) -- (0,2) node[above,fill=white] {\scriptsize $y$};

			\draw[thick, black] (0*\sc,0*\sc) -- ++(\r32*\sc,-0.5*\sc) coordinate (a) -- ++(1*\sc,0*\sc) coordinate (b) -- ++ (-\r32*\sc,0.5*\sc) coordinate (c) -- ++(0*\sc,-1*\sc) coordinate (d) -- ++(\r32*\sc,0.5*\sc);
			
			\draw[line width=1.2pt, black] (1*\sc,1*\sc) coordinate (e) -- (1*\sc,0) -- (1*\sc,-1*\sc);
			\draw[line width=1.2pt, black] (1*\sc,0*\sc) -- (2*\sc,0) coordinate (f) ;
			\draw[line width=1.2pt, black] (0*\sc,0*\sc) -- (a) -- (b);
			
			
			

			\node at (0,0) [ajoint] {};
			\node at (a) [joint] {};
			\node at (b) [joint] {};
			\node at (c) [ajoint] {}; 
			\node at (d) [joint] {}; 
			\node at (e) [ajoint] {};
			\node at (f) [joint] {};
			
			\end{tikzpicture}
		\end{center}	
\end{minipage}%
\begin{minipage}{0.5\textwidth}
\begin{tabular}{@{}cc} \toprule
			Variable & Value \\  \midrule
			$(c_1,s_1)$ 	& $\left(\frac{\sqrt{3}}{2}, -\frac{1}{2}\right)$		\\
			$(c_2,s_2)$ 	& $(1,0)$									\\
			$(c_3,s_3)$		& $\left(-\frac{3}{2}, -\frac{1}{2}\right)$		  	\\
			$(c_4,s_4)$		& $(0,1)$		\\
			$(c_5,s_5)$		& $(0,1)$	 	\\
			$(c_6,s_6)$	    & $\left(-\frac{3}{2}, \frac{1}{2} \right)$		 		\\
			$(c_7,s_7)$	    & $(0,1)$	 	\\
			$(c_8,s_8)$ 	& $(0,-1)$	\\
			$(c_9,s_9)$ 	& $(0,-1)$	\\ \bottomrule
		\end{tabular}
\end{minipage}\caption{A singular configuration of the 3RRR-mechanism}
\end{figure}

\bibliography{literatur}{}
\bibliographystyle{plain}
\end{document}